  \newtheorem{thm}{Theorem}
  \newtheorem{thm2}{Theorem}
 \newtheorem{thm3}{Theorem}
 \newtheorem{thm4}{Theorem}
   \newtheorem{example}[thm2]{Example}
   \newtheorem{remark}[thm3]{Remark}
  \newtheorem{lemma}[thm4]{Lemma}
\begin{document}

\title[Linear Superposition of Minimal Surfaces: Generalized Helicoids and Minimal Cones]{Linear Superposition of Minimal Surfaces: \\ Generalized Helicoids and Minimal Cones} 
  
\bigskip

  \author{Jens Hoppe} 
  \address{KTH Royal Institute of Technology, SE-100 44, Stockholm, Sweden}
\email{hoppe@kth.se}


\begin{abstract}
Observing a superposition principle for independent $\infty$-harmonic functions, a family 
of new minimal hypersurfaces in Euclidean space is found, and that 
(linear combinations of) higher dimensional helicoids 
induce new algebraic minimal cones of arbitrarily high degree, 
generalizing Lawson's minimal cubic cone 
$y_{2} \left(  {x_{1}}^2 - {y_{1}}^2 \right)  -  x_{2} \left( 2 x_1 y_1 \right) =0 $ in ${\mathbb{R}}^{4}$,
and also including a family of Tkachev's minimal cubic cones.
\end{abstract}

\thanks{
 {\it Keywords:} Algebraic minimal cone, $\infty$-harmonic functions, superposition principle}

\maketitle
 
Examples of  algebraic minimal cones include those
constructed by  E. Cartan \cite{Cartan1939}, W. Hsiang \cite{H1967}, H. Lawson \cite{Lawson1970}, D. Ferus, H. Karcher, H. F. M\"{u}nzner \cite{FKM1981},
V. Tkachev \cite{T2010, T2010b}, and G. Linardopoulos, T. Turgut, et al \cite{HLT2016}. 
 
In 2013, generalizing the classical helicoid $z=\arctan\frac{y}{x}$ which is foliated by straight lines in 
${\mathbb{R}}^{3}$, 
new minimal hypersurfaces in ${\mathbb{R}}^{2n+1}$ were constructed by J.Choe and the author \cite{CH2013}  that can be thought of as arising by a multi-screw motion from the Clifford cone in ${\mathbb{R}}^{2n}$

In this article, apart from pointing out an important effective linear superposition principle for a subclass of minimal surfaces,
a multitude of new algebraic minimal cones is constructed,
induced from linear combinations of generalized/higher dimensional helicoids. 

\begin{example}[\textbf{CH-helicoid in ${\mathbb{R}}^{2n+1}$, \cite[Theorem 2]{CH2013}}]    \label{E1}
Let $\lambda \in \mathbb{R}$. Sweeping out the Clifford cone ${\mathbf{C}}^{2n-1}$ in  ${\mathbb{R}}^{2n}$ given by 
\[
   {\mathbf{C}}^{2n-1} = \left\{ \; 
\begin{bmatrix}
  \, p_{1}   \, \\ 
  \, q_{1}   \,   \\ 
\vdots \\
  \, p_{n}    \,  \\ 
  \, q_{n}  \,   \\ 
\end{bmatrix}  \in   {\mathbb{R}}^{2n}  \; \; \vert \; \;   {p_{1}}^{2} + \cdots +  {p_{n}}^{2}  = {q_{1}}^{2} + \cdots +  {q_{n}}^{2}  \;
\right\}
\]
yields the CH-helicoid ${\mathcal{H}}_{\lambda}$  in  ${\mathbb{R}}^{2n+1}$ given by 
\[
{\mathcal{H}}_{\lambda}  = \left\{ \; 
\begin{bmatrix}
   x_{1} \\ 
  y_{1}  \\ 
\vdots \\
 x_{n} \\ 
 y_{n}   \\ 
z \\
\end{bmatrix} =
\begin{bmatrix}
  \, p_{1} \cos \Theta  - q_{1} \sin \Theta  \, \\ 
  \, q_{1} \cos \Theta  +p _{1} \sin \Theta  \,   \\ 
\vdots \\
  \, p_{n} \cos \Theta  - q_{n} \sin \Theta   \,  \\ 
  \, q_{n} \cos \Theta  +p _{n} \sin \Theta   \,    \\ 
  \lambda  \, \Theta \\
\end{bmatrix}  \in   {\mathbb{R}}^{2n+1}  \; \vert \;    \Theta \in \mathbb{R}, \; \begin{bmatrix}
  \, p_{1}   \, \\ 
  \, q_{1}   \,   \\ 
\vdots \\
  \, p_{n}    \,  \\ 
  \, q_{n}  \,   \\ 
\end{bmatrix}  \in {\mathbf{C}}^{2n-1} \;
\right\}.
\]
One can check that ${\mathcal{H}}_{\lambda \neq 0}$ is congruent to  
$\lambda {\mathcal{H}}_{1}$. As observed in \cite[Remark 5]{LL2014}, one finds that the minimal variety ${\mathcal{H}}_{1}$ can be viewed as 
 the multi-valued graph in ${\mathbb{R}}^{2n+1}={\mathbb{C}}^{n} \times \mathbb{R}$ 
\[
 z= \mathbf{arg} \, \left(  \sqrt{  \; (x_{1}+i y_{1})^{2}  + \cdots +  (x_{n}+i y_{n})^{2}  \;  } \;  \right).
\]
\end{example}

\begin{lemma}[\textbf{$\infty$-harmonicity of the height function of the CH-helicoid}] \label{harmonicity}
The CH-function in ${\mathbb{R}}^{2n}={\mathbb{R}}^{n} \times {\mathbb{R}}^{n}$ given by  
\[
 F ={\mathbf{F}}_{{\mathbb{R}}^{2n}}(X, Y) = \frac{1}{2} \arctan  \left( \frac{ V  }{ U } \right), \quad \text{where} \quad
 (U, V) = \left( \, {\Vert X \Vert}_{{}_{{\mathbb{R}}^{n}}}^{\;2} 
 - {\Vert Y \Vert}_{{}_{{\mathbb{R}}^{n}}}^{\;2} , \, 2 {\langle X, Y \rangle}_{{}_{{\mathbb{R}}^{n}}} \, \right),
\]
is $\infty$-harmonic, in the sense that it solves the so called infinity Laplace equation 
\[
0 = {\Delta}_{\infty} F :=  {\big\langle \; \nabla F, \, \nabla \left( \frac{1}{2} { {\Vert  \nabla F   \Vert}_{{}_{{\mathbb{R}}^{2n}}} }^{2} \right) \; \big\rangle}_{
{}_{  {\mathbb{R}}^{2n} }}.
\]
\end{lemma}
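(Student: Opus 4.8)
The plan is to exploit the complex structure hidden in the definition of $(U,V)$. Writing $\zeta_k = x_k + i y_k$, one has $U + iV = \sum_{k=1}^{n} \zeta_k^{2}$, so that $U$ and $V$ are the real and imaginary parts of a single ``holomorphic square.'' First I would record the gradients in $\mathbb{R}^{2n}$: since $U = \sum_k (x_k^{2} - y_k^{2})$ and $V = 2\sum_k x_k y_k$, a one-line computation gives
\[
\nabla U = 2(x_1, -y_1, \dots, x_n, -y_n), \qquad \nabla V = 2(y_1, x_1, \dots, y_n, x_n).
\]
From these I would extract the three Cauchy--Riemann-type identities that drive everything: writing $R^{2} := {\Vert X \Vert}^{2} + {\Vert Y \Vert}^{2}$ for the squared radius in $\mathbb{R}^{2n}$,
\[
{\Vert \nabla U \Vert}^{2} = {\Vert \nabla V \Vert}^{2} = 4R^{2}, \qquad \langle \nabla U, \nabla V \rangle = 0,
\]
together with the Euler relations $\langle \nabla U, P \rangle = 2U$ and $\langle \nabla V, P \rangle = 2V$ (here $P$ denotes the position vector), which simply record that $U$ and $V$ are homogeneous of degree $2$.

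Next I would compute $\nabla F$ directly. Since $F = \tfrac{1}{2} \arctan(V/U)$, the chain rule gives
\[
\nabla F = \frac{U\,\nabla V - V\,\nabla U}{2\rho^{2}}, \qquad \rho^{2} := U^{2} + V^{2},
\]
whereupon the orthogonality and equal-length identities above collapse the cross terms and yield the clean formula $\Vert \nabla F \Vert^{2} = R^{2}/\rho^{2}$. The crucial structural point is that $\Vert \nabla F \Vert^{2}$ depends only on the two scalars $R^{2}$ and $\rho^{2}$.

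To finish, I would show that $\nabla F$ is orthogonal to both $\nabla(R^{2})$ and $\nabla(\rho^{2})$. For the first, $\nabla(R^{2}) = 2P$, and the Euler relations give $\langle U\nabla V - V\nabla U, P \rangle = 2UV - 2VU = 0$. For the second, $\nabla(\rho^{2}) = 2U\nabla U + 2V\nabla V$, and expanding $\langle U\nabla V - V\nabla U,\, U\nabla U + V\nabla V \rangle$ uses $\langle \nabla U, \nabla V \rangle = 0$ and $\Vert \nabla U \Vert^{2} = \Vert \nabla V \Vert^{2}$ to cancel the two surviving terms. Since $\Vert \nabla F \Vert^{2} = R^{2}/\rho^{2}$ is a function of $R^{2}$ and $\rho^{2}$ alone, its gradient is a linear combination of $\nabla(R^{2})$ and $\nabla(\rho^{2})$; hence $\langle \nabla F, \nabla(\tfrac{1}{2}\Vert \nabla F \Vert^{2}) \rangle = 0$, which is precisely $\Delta_{\infty} F = 0$.

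I do not anticipate a serious obstacle: the entire content is the observation that the Cauchy--Riemann-type structure of $(U,V)$ forces $\nabla F$ to be tangent to the common level sets of $R^{2}$ and $\rho^{2}$. The only place demanding care is bookkeeping with the $2n$ interleaved coordinates and the branch of $\arctan$; but since only $\nabla F$ and its length enter the $\infty$-Laplacian, the branch ambiguity (an additive constant) is harmless.
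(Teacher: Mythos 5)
Your proof is correct: the gradient formulas, the identities $\Vert \nabla U \Vert^{2} = \Vert \nabla V \Vert^{2} = 4R^{2}$ and $\langle \nabla U, \nabla V \rangle = 0$, the Euler relations, the resulting formula $\Vert \nabla F \Vert^{2} = R^{2}/\rho^{2}$, and the two orthogonality checks $\langle \nabla F, \nabla (R^{2})\rangle = \langle \nabla F, \nabla(\rho^{2})\rangle = 0$ all verify, and together they do give $\Delta_{\infty}F = 0$. The paper's own proof is literally the phrase ``Straightforward computation,'' so your argument is a legitimate filling-in of it --- and in fact a more structured one, since reducing the claim to the tangency of $\nabla F$ to the joint level sets of $R^{2}$ and $\rho^{2}$ explains \emph{why} the computation closes, rather than differentiating blindly.
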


\begin{proof}
Straightforward computation. 
\end{proof}

We present an alternative proof of the minimality of the CH-helicoid, and shall see that the analytic proof motivates Theorem \ref{main 1}.

\begin{lemma}[\textbf{Minimality of the CH-helicoid}]   \label{chminimal}
The multi-valued graph 
\[ 
z= \mathbf{arg} \, \left(  \sqrt{  \; (x_{1}+i y_{1})^{2}  + \cdots +  (x_{n}+i y_{n})^{2}  \;  } \;  \right)
\]
has zero mean curvature in Euclidean space ${\mathbb{R}}^{2n+1}={\mathbb{C}}^{n} \times \mathbb{R}$.
\end{lemma}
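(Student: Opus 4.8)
The plan is to reduce zero mean curvature of the graph $z=F(X,Y)$ to two independent scalar conditions on the CH-function $F$ of Lemma~\ref{harmonicity}, and to verify each. First I would record that the height function is exactly that $F$: writing $w:=\sum_{j=1}^{n}(x_{j}+iy_{j})^{2}=U+iV$ with $(U,V)$ as in the lemma, the quantity under the square root has argument $\arg w$, so $\sqrt{\,\cdot\,}$ has argument $\tfrac12\arg w$, i.e. $z=\tfrac12\arg w=\tfrac12\arctan(V/U)=F$.

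Next I would expand the graph minimal surface equation over $\mathbb{R}^{2n}$,
\[
\operatorname{div}\!\left(\frac{\nabla F}{\sqrt{1+\Vert\nabla F\Vert^{2}}}\right)=0,
\]
into its quasilinear form
\[
(1+\Vert\nabla F\Vert^{2})\,\Delta F-\sum_{i,j}F_{i}F_{j}F_{ij}=0.
\]
The structural point is that the cubic term is precisely $\Delta_{\infty}F=\langle\nabla F,\nabla(\tfrac12\Vert\nabla F\Vert^{2})\rangle$, since $\partial_{k}(\tfrac12\Vert\nabla F\Vert^{2})=\sum_{i}F_{i}F_{ik}$. Hence the equation decouples as $(1+\Vert\nabla F\Vert^{2})\,\Delta F=\Delta_{\infty}F$, and Lemma~\ref{harmonicity} makes the right-hand side vanish. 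As $1+\Vert\nabla F\Vert^{2}>0$, minimality is therefore equivalent to the single condition $\Delta F=0$.

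It then remains to prove the CH-function harmonic. Here I would introduce complex coordinates $\zeta_{j}=x_{j}+iy_{j}$, in which $\Delta=4\sum_{j}\partial_{\zeta_{j}}\partial_{\bar\zeta_{j}}$, and use that $w=\sum_{j}\zeta_{j}^{2}$ is holomorphic (killed by every $\partial_{\bar\zeta_{j}}$). Then $\partial_{\bar\zeta_{j}}\log w=w^{-1}\partial_{\bar\zeta_{j}}w=0$, so $\partial_{\zeta_{j}}\partial_{\bar\zeta_{j}}\log w=0$ for each $j$ and $\Delta\log w=0$; taking imaginary parts gives $\Delta F=\tfrac12\Delta(\operatorname{Im}\log w)=0$ on $\{w\neq0\}$.

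Combining the two vanishings yields the minimal surface equation, hence zero mean curvature. The only point needing care is the branch locus $\{w=0\}=\{U=V=0\}$, a set of codimension two in $\mathbb{R}^{2n}$; off it the multivalued graph is a smooth immersed hypersurface on which the expansion above is legitimate, so minimality holds there, which is what the statement asserts. I expect the genuinely new input to be the harmonicity step — the $\infty$-harmonicity being already supplied by Lemma~\ref{harmonicity} — and it is the coincidence that $F$ is simultaneously harmonic and $\infty$-harmonic that drives the decoupling and, ultimately, motivates the superposition principle of Theorem~\ref{main 1}.
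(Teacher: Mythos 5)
Your proposal is correct and follows essentially the same route as the paper: the paper likewise rewrites the graph equation as $0=\bigl[(1+\Vert\nabla F\Vert^{2})\Delta F-\Delta_{\infty}F\bigr]/(1+\Vert\nabla F\Vert^{2})^{3/2}$, invokes Lemma~\ref{harmonicity} for $\Delta_{\infty}F=0$, and checks $\Delta F=0$. The only difference is that the paper dismisses the harmonicity as ``straightforward to check,'' whereas you supply it cleanly via the holomorphy of $w=\sum_{j}\zeta_{j}^{2}$ and $F=\tfrac12\operatorname{Im}\log w$ away from the branch locus $\{w=0\}$ --- a welcome filling-in of detail, not a different method.
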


\begin{proof} One needs to check that the induced height function $F$ solves 
 the minimal hypersurface equation
\[
 0=  {\nabla} \cdot \left(   \frac{\nabla F }{ \sqrt{1+{ \Vert \nabla F \Vert}^2 \,} \, } \right)=  \frac{  \left(  1+{ \Vert \nabla F \Vert}^2\right)  \Delta F 
 -  {\Delta}_{\infty} F }{ { \left( 1+{ \Vert \nabla F \Vert}^2 \right)}^{ \frac{3}{2}} }.
\]
 It is straightforward to check $\Delta F=0$ ( and ${\Delta}_{\infty} F=0$, by Lemma 1) 
\end{proof}

\begin{remark}
In \cite[Theorem 2]{Ar1968}, G. Aronsson showed that if a two-variable harmonic function $F(x,y)$ is also $\infty$-harmonic, then $F(x,y)=ax+by+c$ or $F(x,y)=d\arctan\left( \frac{y-y_{0}}{x-x_{0}}\right) +e$ for some real constants $a$, $b$, $c$, $d$, $e$, $x_{0}$, $y_{0}$. See also the classification result due to W. C. Graustein \cite{Gr1940}. Both references include interesting hydrodynamic interpretations.
\end{remark}

 \begin{remark}
The CH-function is $p$-harmonic as it solves the $p$-Laplace equation 
\[
0={\Delta}_{p} F :=\nabla \cdot \left( \, {\Vert \nabla F \Vert}^{\, p-2} \nabla F \, \right) = {\Vert \nabla F \Vert}^{\, p-4} \left[ \, (p-2) \,
{\Delta}_{\infty} F +  {\Vert \nabla F \Vert}^{2} \, \Delta F  \, \right].
\] 
\end{remark}

\begin{lemma}[\textbf{Superposition principle for $\infty$-Laplacian operator}] \label{superposition}
Let ${\mathbf{F}}_{A} \left(u_{1}, \cdots, u_{A} \right)$ and ${\mathbf{F}}_{B} \left(v_{1}, \cdots, v_{B} \right)$ be two independent ${\mathcal{C}}^{2}$ 
real valued functions. For real constants $\mu_{A}$ and $\mu_{B}$, we associate the combination
 ${\mathbf{F}}  \left(u_{1}, \cdots, u_{A}, v_{1}, \cdots, v_{B} \right) :=\, {\mu_{A}}^{\frac{1}{3}}  \, {\mathbf{F}}_{A}  + 
   \, {\mu_{B}}^{\frac{1}{3}}  \, {\mathbf{F}}_{B} $. Then, 
\[
{\Delta}_{\infty, {\mathbb{R}}^{A+B}}   {\mathbf{F}}  
=  {\mu}_{A} \, {\Delta}_{\infty, {\mathbb{R}}^{A}} {\mathbf{F}}_{A} +  {\mu}_{B} \, {\Delta}_{\infty, {\mathbb{R}}^{B}} {\mathbf{F}}_{B}.
\]
\end{lemma}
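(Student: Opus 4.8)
The plan is to rewrite the $\infty$-Laplacian in its equivalent bilinear form and then exploit the fact that the two groups of variables are disjoint. Starting from the definition in Lemma~\ref{harmonicity}, a direct expansion gives the pointwise identity $\Delta_{\infty} F = \sum_{i,j} (\partial_{i} F)(\partial_{j} F)(\partial_{i}\partial_{j} F) = \langle \nabla F, (D^{2}F)\nabla F\rangle$; that is, the $\infty$-Laplacian is the Hessian quadratic form evaluated on the gradient. I would record this reformulation first, since it makes the scaling and separation properties transparent. I would also state the independence hypothesis explicitly as the disjointness of the variable sets $\{u_{1},\dots,u_{A}\}$ and $\{v_{1},\dots,v_{B}\}$.

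Next I would observe two structural facts coming from this independence. First, the gradient of $\mathbf{F}$ splits as $\nabla_{{\mathbb{R}}^{A+B}} \mathbf{F} = \left( {\mu_{A}}^{1/3}\nabla_{{\mathbb{R}}^{A}}\mathbf{F}_{A}, \, {\mu_{B}}^{1/3}\nabla_{{\mathbb{R}}^{B}}\mathbf{F}_{B}\right)$, since each summand depends only on its own variables. Second, and crucially, the full Hessian $D^{2}\mathbf{F}$ is block diagonal: every mixed second derivative $\partial_{u_{i}}\partial_{v_{j}}\mathbf{F}$ vanishes, because $\mathbf{F}_{A}$ does not involve the $v$'s and $\mathbf{F}_{B}$ does not involve the $u$'s. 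Consequently, when one expands $\langle \nabla\mathbf{F}, (D^{2}\mathbf{F})\nabla\mathbf{F}\rangle$, every cross term pairing a $u$-index with a $v$-index drops out, and the sum separates cleanly into a pure $A$-block and a pure $B$-block.

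It then only remains to track the scalar factors. In the $A$-block each of the two gradient factors carries a ${\mu_{A}}^{1/3}$ and the Hessian block carries one more, so the prefactor is $\left({\mu_{A}}^{1/3}\right)^{3} = \mu_{A}$, multiplying $\langle \nabla_{{\mathbb{R}}^{A}}\mathbf{F}_{A}, (D^{2}_{{\mathbb{R}}^{A}}\mathbf{F}_{A})\nabla_{{\mathbb{R}}^{A}}\mathbf{F}_{A}\rangle = \Delta_{\infty,{\mathbb{R}}^{A}}\mathbf{F}_{A}$; the $B$-block is identical with $A$ replaced by $B$. Adding the two blocks yields the asserted identity. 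This is really the cubic homogeneity $\Delta_{\infty}(cF) = c^{3}\,\Delta_{\infty}F$ in disguise, which is precisely why the exponent $\tfrac{1}{3}$ is forced on the coefficients.

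There is no serious obstacle here; the single point requiring care is the vanishing of the mixed second derivatives, which is exactly the meaning of the two functions being independent (disjoint variable sets) and is what guarantees that no cross-interaction term survives. Any such surviving term would couple $\mathbf{F}_{A}$ and $\mathbf{F}_{B}$ nonlinearly and thereby destroy the clean superposition, so I would flag this as the one hypothesis that the entire statement genuinely rests upon.
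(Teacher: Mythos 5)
Your proof is correct: the bilinear-form identity $\Delta_{\infty}F=\langle\nabla F,(D^{2}F)\nabla F\rangle$, the block-diagonal Hessian coming from disjoint variable sets, and the $(\mu^{1/3})^{3}=\mu$ bookkeeping give exactly the claimed identity, including for negative $\mu$ via real cube roots. The paper states this lemma without any written proof, treating it as a straightforward computation, and your argument is precisely that computation made explicit, so there is nothing to reconcile.
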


\begin{thm}[\textbf{Generalized helicoids in odd dimensional  Euclidean space}]   \label{main 1}
The multi-valued graph of an arbitrary finite number of linear combination of independent CH height functions  (introduced in Lemma \ref{harmonicity}) becomes a minimal hypersurface.
\end{thm}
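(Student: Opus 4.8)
The plan is to read off minimality from the numerator of the mean-curvature operator recorded in the proof of Lemma~\ref{chminimal}: the multi-valued graph $z=F$ is minimal precisely when
\[
\left(1+\|\nabla F\|^{2}\right)\Delta F-\Delta_{\infty}F=0 .
\]
Hence it suffices to produce a height function $F$ that is \emph{simultaneously} harmonic ($\Delta F=0$) and $\infty$-harmonic ($\Delta_{\infty}F=0$); the two terms then vanish separately. Each individual CH height function $F_{i}=\mathbf{F}_{\mathbb{R}^{2n_{i}}}$ already enjoys both properties --- harmonicity from the computation in Lemma~\ref{chminimal} and $\infty$-harmonicity from Lemma~\ref{harmonicity} --- so the whole content of the theorem is that this rare double coincidence (cf.\ the Aronsson-type remark above) survives passage to a linear combination on disjoint blocks of variables.

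First I would treat the harmonic part, which is immediate. Writing $F=\sum_{i}c_{i}F_{i}$, where the $F_{i}$ act on mutually disjoint coordinate groups, the ambient Laplacian splits as a sum of the block Laplacians, and since $F_{i}$ does not depend on the coordinates of the other blocks, linearity gives $\Delta F=\sum_{i}c_{i}\Delta F_{i}=0$. The $\infty$-harmonic part is the substance of the argument and is exactly what the superposition principle of Lemma~\ref{superposition} is designed for. Setting $\mu_{i}:=c_{i}^{3}$, so that $c_{i}=\mu_{i}^{1/3}$, I would apply the superposition identity to conclude
\[
\Delta_{\infty}F=\sum_{i}\mu_{i}\,\Delta_{\infty}F_{i}=0 ,
\]
the last equality again by Lemma~\ref{harmonicity}.

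The one point that needs care --- and the main, if mild, obstacle --- is that Lemma~\ref{superposition} is stated for a \emph{pair} of independent functions, whereas the theorem allows an arbitrary finite number. I would dispatch this either by a short induction or, more transparently, by direct computation. The direct route rests on the fact that $\Delta_{\infty}$ is homogeneous of degree three, $\Delta_{\infty}F=\sum_{j,k}\partial_{j}F\,\partial_{k}F\,\partial_{jk}F$, together with the observation that for $F=\sum_{i}c_{i}F_{i}$ on disjoint blocks every mixed second derivative $\partial_{jk}F$ with $j,k$ in different blocks vanishes; the triple sum therefore collapses block-by-block to $\sum_{i}c_{i}^{3}\,\Delta_{\infty}F_{i}$, which is precisely the multi-function form of the superposition identity. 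For the inductive route one must instead verify that a partial sum $\mu_{1}^{1/3}F_{1}+\cdots+\mu_{m}^{1/3}F_{m}$ remains independent of the next summand $F_{m+1}$ (it depends only on the first $m$ blocks, disjoint from the block of $F_{m+1}$), so that Lemma~\ref{superposition} applies at each stage. Either way, feeding $\Delta F=0$ and $\Delta_{\infty}F=0$ into the minimality criterion finishes the proof.
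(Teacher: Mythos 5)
Your proposal is correct and follows exactly the paper's route: the paper's own (very terse) proof likewise combines Lemma~\ref{harmonicity}, Lemma~\ref{chminimal}, and Lemma~\ref{superposition} to conclude that a linear combination of independent CH functions is both harmonic and $\infty$-harmonic, hence satisfies the minimal surface equation. Your additional care in extending the two-function superposition lemma to finitely many summands (via the block-diagonal Hessian structure or induction) is a detail the paper leaves implicit, not a different approach.
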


\begin{proof} From Lemma \ref{harmonicity}, \ref{chminimal}, and \ref{superposition}, one sees that a linear combination of independent CH 
functions should be both harmonic and $\infty$-harmonic. 
\end{proof}

\begin{example}[\textbf{Superposition principle for generalized helicoids}]    \label{E2}
Let $\mu_{A}$, $\mu_{B}$, $\mu_{C} \in \mathbb{R}$ be constants. By Theorem \ref{main 1}, the multi-valued graph
 \[
 z= \mu_{A} \, \mathbf{arg}  \left(  \sqrt{   (x_{1}+i y_{1})^{2}  +\cdots + (x_{4}+i y_{4})^{2} }\, \right) 
 + \mu_{B} \,  \mathbf{arg} ( x_{5} +iy_{5}) + {\mu}_{C} \,  \mathbf{arg}  ( x_{6} +iy_{6} ).
\]
 is minimal  in ${\mathbb{R}}^{13}={\mathbb{C}}^{4} \times {\mathbb{C}}  \times {\mathbb{C}}  \times {\mathbb{R}}$.
 It is invariant under the multi-screw motion
 \[
   \left( {\zeta}_{1}, \cdots , {\zeta}_{4}, {\zeta}_{5}, {\zeta}_{6}, z \right) \rightarrow   \left(\, e^{i \,t_{A}} {\zeta}_{1}, \cdots, e^{i \,t_{A}} {\zeta}_{4},  \, e^{i \, t_{B}}{\zeta}_{5}, \, e^{i \, t_{C}}{\zeta}_{6}, z+ {\mu}_{A}  t_{A}+ {\mu}_{B} t_{B} + {\mu}_{C} t_{C} \,\right),
    \]
where ${\zeta}_{k}=x_{k} +i y_{k}$ for $k \in \left\{1, \cdots, 6\right\}$. 
\end{example} 


\begin{thm}[\textbf{$n$-variable harmonic and $\infty$-harmonic functions induce minimal hypersurfaces in ${\mathbb{R}}^{n+1}$ and ${\mathbb{R}}^{n+2}$}]   \label{main 2}
Let $\mathbf{f} \left(z_{1}, \cdots, z_{n} \right)$ is a ${\mathcal{C}}^{2}$-function satisyfing 
\[
\Delta \mathbf{f}=0  \;\; \text{and} \;\;  {\Delta}_{\infty} \mathbf{f}=0.
\]
\begin{enumerate}
\item The graph $z_{n+1}=\mathbf{f} \left(z_{1}, \cdots, z_{n} \right)$ is a minimal hypersurface in  ${\mathbb{R}}^{n+1}$.
\item The graph $z_{n+1}= z_{0} \; \tan \mathbf{f} \left(z_{1}, \cdots, z_{n} \right)$ is a minimal hypersurface in  ${\mathbb{R}}^{n+2}$. 
\end{enumerate}
\end{thm}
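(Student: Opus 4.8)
The plan is to reduce both statements to the mean-curvature identity already recorded in the proof of Lemma \ref{chminimal}: a graph $z_{n+1}=F$ over Euclidean space is minimal exactly when
\[
0 = \nabla \cdot \left( \frac{\nabla F}{\sqrt{1+\|\nabla F\|^2}} \right) = \frac{(1+\|\nabla F\|^2)\,\Delta F - \Delta_\infty F}{(1+\|\nabla F\|^2)^{3/2}},
\]
i.e. when $(1+\|\nabla F\|^2)\,\Delta F = \Delta_\infty F$. Part (1) is then immediate: taking $F=\mathbf{f}$, the hypotheses $\Delta\mathbf{f}=0$ and $\Delta_\infty\mathbf{f}=0$ make both terms of the numerator vanish, so the graph is minimal in $\mathbb{R}^{n+1}$.

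For part (2) I would set $G(z_0,z_1,\dots,z_n)=z_0\tan\mathbf{f}(z_1,\dots,z_n)$, regard it as a graph over $\mathbb{R}^{n+1}$, and verify the same equation $(1+\|\nabla G\|^2)\,\Delta G = \Delta_\infty G$ with all derivatives now taken in $\mathbb{R}^{n+1}$. The computational device is to abbreviate $T=\tan\mathbf{f}$, $S=\sec^2\mathbf{f}=1+T^2$, $f_i=\partial_{z_i}\mathbf{f}$, and to collect every quantity into the three $\mathbb{R}^n$-invariants $\|\nabla\mathbf{f}\|^2=\sum_i f_i^2$, $\Delta\mathbf{f}=\sum_i f_{ii}$, and $\Delta_\infty\mathbf{f}=\sum_{i,j}f_if_jf_{ij}$. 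The chain rule (using $\partial_{z_i}S=2STf_i$) yields $\partial_{z_0}G=T$, $\partial_{z_i}G=z_0Sf_i$, together with the Hessian entries $\partial_{00}G=0$, $\partial_{0i}G=Sf_i$, and $\partial_{ij}G=z_0S\left(2Tf_if_j+f_{ij}\right)$. From these one reads off $1+\|\nabla G\|^2=S\left(1+z_0^2S\|\nabla\mathbf{f}\|^2\right)$ and, after imposing $\Delta\mathbf{f}=0$, $\Delta G=2z_0ST\|\nabla\mathbf{f}\|^2$.

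The genuinely instructive step is the evaluation of $\Delta_\infty G=\sum_{j,k}\partial_jG\,\partial_kG\,\partial_{jk}G$ over the $n+1$ indices $j,k\in\{0,1,\dots,n\}$: isolating the $z_0$-direction, the $(0,0)$ term drops out, the mixed $(0,i)$ terms contribute $2Tz_0S^2\|\nabla\mathbf{f}\|^2$, and the pure block contributes $z_0^3S^3\bigl(2T\|\nabla\mathbf{f}\|^4+\Delta_\infty\mathbf{f}\bigr)$. Here the cubic-in-gradient form of the $\infty$-Laplacian reproduces precisely the invariant $\Delta_\infty\mathbf{f}$, so the hypothesis $\Delta_\infty\mathbf{f}=0$ is exactly what is required to eliminate the sole obstructing term. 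After this cancellation both $(1+\|\nabla G\|^2)\,\Delta G$ and $\Delta_\infty G$ collapse to the common value $2Tz_0S^2\|\nabla\mathbf{f}\|^2\bigl(1+z_0^2S\|\nabla\mathbf{f}\|^2\bigr)$, the minimal-hypersurface equation holds, and the graph is minimal in $\mathbb{R}^{n+2}$. I expect the only real hazard to be routine bookkeeping — keeping the $\sec^2/\tan$ derivatives and the index splitting straight — since, once $\Delta\mathbf{f}=\Delta_\infty\mathbf{f}=0$ are in hand, no analytic difficulty remains.
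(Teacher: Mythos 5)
Your proof is correct, but for item (2) it takes a genuinely different route from the paper's. The paper does no computation at all for item (2): it realizes the hypersurface $z_{n+1}=z_{0}\tan \mathbf{f}$ as the zero level set of
\[
\mathbf{V}\left(z_{0},z_{1},\cdots,z_{n},z_{n+1}\right) \;=\; -\arctan\!\left(\tfrac{z_{n+1}}{z_{0}}\right)+\mathbf{f}\left(z_{1},\cdots,z_{n}\right),
\]
invokes the level-set minimality criterion $0=\Vert\nabla\mathbf{V}\Vert^{2}\Delta\mathbf{V}-\Delta_{\infty}\mathbf{V}$, and then applies the superposition principle (Lemma \ref{superposition}): the two summands depend on disjoint sets of variables and each is harmonic and $\infty$-harmonic, hence so is $\mathbf{V}$, and the criterion holds identically. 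You instead verify the non-parametric minimal surface equation $\left(1+\Vert\nabla G\Vert^{2}\right)\Delta G=\Delta_{\infty}G$ for $G=z_{0}\tan\mathbf{f}$ as a graph over ${\mathbb{R}}^{n+1}$ by direct differentiation; I checked your bookkeeping ($\partial_{0}G=T$, $\partial_{i}G=z_{0}Sf_{i}$, $\partial_{ij}G=z_{0}S\left(2Tf_{i}f_{j}+f_{ij}\right)$, and both sides of the equation collapsing to $2Tz_{0}S^{2}\Vert\nabla\mathbf{f}\Vert^{2}\left(1+z_{0}^{2}S\Vert\nabla\mathbf{f}\Vert^{2}\right)$ once $\Delta\mathbf{f}=\Delta_{\infty}\mathbf{f}=0$ are imposed) and it is right. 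What the paper's route buys: it is computation-free, it explains conceptually why $\tan$ appears (the graph is where the planar angle function $\arctan(z_{n+1}/z_{0})$ equals $\mathbf{f}$), and it generalizes instantly --- this is exactly what yields the remark following the theorem (the zero set of $\mathbf{f}+\mathbf{g}$ for any two independent harmonic and $\infty$-harmonic functions is minimal) and Example \ref{E7}. What your route buys: it is self-contained, needing neither Lemma \ref{superposition} nor the implicit-surface formulation, and it exhibits precisely where the hypothesis $\Delta_{\infty}\mathbf{f}=0$ is used, namely as the single obstructing term in the pure block of $\Delta_{\infty}G$. For item (1) the two arguments coincide in substance, both reducing to the vanishing of the numerator $\left(1+\Vert\nabla F\Vert^{2}\right)\Delta F-\Delta_{\infty}F$.
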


\begin{proof}
The proofs for the two items use the same idea.
First verify the item (1) using the superposition principle. The idea is to view the graph $z_{n+1}=\mathbf{f} \left(z_{1}, \cdots, z_{n} \right)$
as a level set of an $(n+1)$-variable function $\mathbf{U} \left(z_{1}, \cdots, z_{n}, z_{n+1} \right) := -z_{n+1} + \mathbf{f} \left(z_{1}, \cdots, z_{n} \right)$: 
\[
\left\{ \;  \left(z_{1}, \cdots, z_{n}, z_{n+1} \right)  \in {\mathbb{R}}^{n+1}  \;  \vert \;   0= \mathbf{U} \left(z_{1}, \cdots, z_{n}, z_{n+1} \right) \; \right\}.
\]
This level set is a minimal submanifold if and only if the function $\mathbf{U}$ satisfies 
\[
0 = \sum_{k=1}^{n+1}  \frac{\partial }{\partial z_{k}}  \left(    \frac{    \frac{\partial \mathbf{U}}{\partial z_{k}}    }{  \, {\Vert \nabla \mathbf{U} \Vert}  \, }         \right), 
 \quad \text{or equivalently,} \quad
 0=  {\Vert \nabla \mathbf{U} \Vert}^2 \Delta \mathbf{U} - \Delta_{\infty} \mathbf{U}.
\]
Since both $-z_{n+1}$ and $\mathbf{f} \left(z_{1}, \cdots, z_{n} \right)$ are harmonic and $\infty$-harmonic, by the superposition principle, its sum $\mathbf{U}$
is harmonic and $\infty$-harmonic. This guarantees the desired equality $0=  {\Vert \nabla \mathbf{U} \Vert}^2 \Delta \mathbf{U} - \Delta_{\infty} \mathbf{U}$. 
For the proof of item (2), introduce the splitting 
\[
\mathbf{V} \left(z_{0}, z_{1}, \cdots, z_{n}, z_{n+1} \right) := - arctan \left( \frac{z_{n+1}}{z_{0}} \right)+ \mathbf{f} \left(z_{1}, \cdots, z_{n} \right).
\]
Since both $- arctan \left( \frac{z_{n+1}}{z_{0}} \right)$ and  $\mathbf{f} \left(z_{1}, \cdots, z_{n} \right)$ are harmonic and $\infty$-harmonic, by the superposition principle, 
its sum $\mathbf{V}$ is also  harmonic and $\infty$-harmonic. Thus, the zero set $\left\{ \;  \left(z_{0}, z_{1}, \cdots, z_{n}, z_{n+1} \right)  \in {\mathbb{R}}^{n+2}  \;  \vert \;  
 0= \mathbf{V} \left(z_{0}, z_{1}, \cdots, z_{n}, z_{n+1} \right) \; \right\}$ is minimal in  ${\mathbb{R}}^{n+2}$.
 \end{proof}

\begin{remark} The same argument of the proof of Theorem \ref{main 2} shows that whenever two  ${\mathcal{C}}^{2}$-functions $\mathbf{f} \left(z_{1}, \cdots, z_{n} \right)$ and
$\mathbf{g} \left(w_{1}, \cdots, w_{m} \right)$ are  harmonic and $\infty$-harmonic, the level set 
\[
\left\{ \;  \left(z_{1}, \cdots, z_{n}, w_{1}, \cdots, w_{m} \right)  \in {\mathbb{R}}^{n+m}  \;  \vert \;   0=\mathbf{f} \left(z_{1}, \cdots, z_{n} \right) + \mathbf{g} \left(w_{1}, \cdots, w_{m} \right) \; \right\}.
\]
defines a (possibly singular) minimal submanifold in Euclidean space ${\mathbb{R}}^{n+m}$.
\end{remark}

\begin{example}[\textbf{Helicoids in ${\mathbb{R}}^{3}$ and Clifford's quadratic cone in ${\mathbb{R}}^{4}$}]  \label{E3} The height function $\mathbf{f} \left( x_1, y_1 \right)= arctan \left( 
\frac{y_1}{x_1} \right)$ of the helicoid $z=arctan \left(\frac{y_1}{x_1} \right)$ in ${\mathbb{R}}^{3}$ is both harmonic and $\infty$-harmonic. Then, item (2) of Theorem \ref{main 2} guarantees that 
the graph $\Sigma$ given by $y_{2}= x_{2} \tan  \mathbf{f} \left( x_1, y_1 \right)$ becomes a minimal hypersurface in ${\mathbb{R}}^{4}$. The hypersurface $\Sigma$ is the quadratic
 cone $y_2 x_1 = x_2 y_1$. In fact, applying the $\frac{\pi}{4}$-rotations in $x_1 y_2$-plane and $x_{1}y_{2}$-plane, one finds that $\Sigma$ is congruent to the 
hypersurface ${y_{2}}^{2} - {x_{1}}^2 = {y_{1}}^{2} - {x_{2}}^2$. The $3$-fold  $\Sigma$ is the cone over Clifford's minimal torus ${x_{1}}^{2}+{y_{1}}^{2} ={x_{2}}^{2}+{y_{2}}^{2} =\frac{1}{2} $ in  
the sphere ${\mathbb{S}}^{3} \subset {\mathbb{R}}^{4}$. 
\end{example} 

\begin{example}[\textbf{Lawson's algebraic minimal cones \cite{Lawson1970} of degree $N+1$ in ${\mathbb{R}}^{4}$}]  \label{E4} Let $N \geq 1$ be an integer. 
Apply the dilation $\left(x_{1}, y_{1}, z_{1} \right)$
$\to$ $\left( \frac{x_{1}}{N}, \frac{y_{1}}{N}, \frac{z_{1}}{N} \right)$ to the minimal surface $z=arctan \left(\frac{y_1}{x_1} \right)$ in ${\mathbb{R}}^{3}$ to obtain the helicoid  $z={\mathbf{f}}_{N} \left( x_1, y_1 \right) =N \, arctan \left(\frac{y_1}{x_1} \right)$. Since the height function ${\mathbf{f}}_{N} \left( x_1, y_1 \right)= N \,arctan \left( \frac{y_1}{x_1} \right)$   is both harmonic and $\infty$-harmonic,  
introducing the rational polynomial ${\mathbf{q}}_{N}(t) = \tan \left( N \, arctan \; t \right) \in {\mathbb{Q}}[t]$, the item (2) of Theorem \ref{main 2} guarantees that 
the graph ${\Sigma}_{N}$ given by $y_{2}= x_{2} \mathbf{q} \left( \frac{y_{1}}{x_{1}} \right)$ is minimal  in ${\mathbb{R}}^{4}$. It is easy to check that 
${\Sigma}_{N}$ becomes a Lawson's algebraic minimal cone of degree $N+1$ in ${\mathbb{R}}^{4}$. As noticed in \cite[Theorem 3 and Proposition 7.2]{Lawson1970}, 
 the $3$-fold ${\Sigma}_{N}$ becomes the cone over a ruled minimal surface in the sphere ${\mathbb{S}}^{3} \subset {\mathbb{R}}^{4}$. 
In the case when $N=2$, one has  ${\mathbf{q}}_{2}(t) = \tan \left( 2 \, arctan \; t \right) = \frac{2t}{1 - t^2}$. Then one finds that the minimal $3$-fold ${\Sigma}_{2}$ is the cubic cone 
$y_{2} \left(  {x_{1}}^2 - {y_{1}}^2 \right)  -  x_{2} \left( 2 x_1 y_1 \right) =0 $ in ${\mathbb{R}}^{4}$.
 \end{example} 

\begin{example}[\textbf{CH-helicoids in ${\mathbb{R}}^{2n+1}$ and Tkachev's cubic cone in ${\mathbb{R}}^{2n+2}$}]    \label{E5}
The harmonicity and $\infty$-harmonicity of the height function 
\[
\mathbf{f} \left( x_{1}, y_{1}, \cdots, x_{n}, y_{n} \right) = arctan \left( \, \frac{  2   x_{1} y_{1}   + \cdots + 2 x_{n} y_{n}   }{    \left( {x_{1}}^{2} - {y_{1}}^{2}  \right) + \cdots +  \left( {x_{n}}^{2} - {y_{n}}^{2}  \right)   } \, \right)
\]
and the item (2) of Theorem \ref{main 2} guarantee that the cone
\[
y_{n+1} = x_{n+1} \left( \, \frac{ \, 2   x_{1} y_{1}   + \cdots + 2 x_{n} y_{n}  \,  }{    \left( {x_{1}}^{2} - {y_{1}}^{2}  \right) + \cdots +  \left( {x_{n}}^{2} - {y_{n}}^{2}  \right)   } \, \right)
\]
is minimal in ${\mathbb{R}}^{2n+2}$. It coincides with one of Tkachev's minimal cubic cones \cite[Example 4]{T2010}
(V. Tkachev used Clifford algebras to construct a large class of minimal cubic cones; see also \cite[Chapter 6]{NTV2014} and  \cite{T2010b}). 
.
\end{example}

\begin{example}[\textbf{Higher degree generalization of Tkachev's cubic cone in ${\mathbb{R}}^{2n+2}$}]     \label{E6}
For an integer $N \geq 1$, introduce the rational polynomial ${\mathbf{q}}_{N} (t) = \tan \left( N \, arctan \; t \right) \in {\mathbb{Q}}[t]$. Set
\begin{enumerate}
\item ${\mathbf{T}}_{1} \left( x_{1}, y_{1}, \cdots, x_{n}, y_{n} \right)=   \left( {x_{1}}^{2} - {y_{1}}^{2}  \right) + \cdots +  \left( {x_{n}}^{2} - {y_{n}}^{2}  \right)$, 
\item ${\mathbf{T}}_{2} \left( x_{1}, y_{1}, \cdots, x_{n}, y_{n} \right)= 2   x_{1} y_{1}   + \cdots + 2 x_{n} y_{n}$.
\end{enumerate}
Since the function ${\mathbf{f}}_{N} \left( x_{1}, y_{1}, \cdots, x_{n}, y_{n} \right) = N \, arctan \left( \, \frac{ \, {\mathbf{T}}_{2} \left( x_{1}, y_{1}, \cdots, x_{n}, y_{n} \right)  \,  }{ {\mathbf{T}}_{1} \left( x_{1}, y_{1}, \cdots, x_{n}, y_{n} \right)  } \, \right)$
is both harmonic and $\infty$-harmonic, the item (2) of Theorem 2 
guarantees that the algebraic cone
\[
y_{n+1} = x_{n+1} \, {\mathbf{q}}_{N} \left( \, \frac{ \,{\mathbf{T}}_{2} \left( x_{1}, y_{1}, \cdots, x_{n}, y_{n} \right) \,  }{  {\mathbf{T}}_{1} \left( x_{1}, y_{1}, \cdots, x_{n}, y_{n} \right)  } \, \right)
\]
is a minimal hypersurface in ${\mathbb{R}}^{2n+2}$. Consider the case  $N=2$. We find that  ${\mathbf{q}}_{2}(t) = \tan \left( 2 \, arctan \; t \right) = \frac{2t}{1 - t^2}$, and that 
the $(2n+1)$-fold ${\Sigma}_{2}$ is the minimal quintic cone in ${\mathbb{R}}^{2n+2}$:  
\[
y_{n+1} \left[    {(   {\mathbf{T}}_{1}  )}^2    -  {(  {\mathbf{T}}_{2}  )}^2   \right]   -  x_{n+1}  \left( 2  {\mathbf{T}}_{1} {\mathbf{T}}_{2} \right) =0.
\]
\end{example}

\begin{example}[\textbf{Minimal quintic cones in ${\mathbb{R}}^{4n+2}$}]    \label{E7}
Introduce quadratic polynomials
\begin{enumerate}
\item $ {\mathbf{T}}_{1} \left( x_{1}, y_{1}, \cdots, x_{n}, y_{n} \right)=   \left( {x_{1}}^{2} - {y_{1}}^{2}  \right) + \cdots +  \left( {x_{n}}^{2} - {y_{n}}^{2}  \right)$,  
\item ${\mathbf{T}}_{2} \left( x_{1}, y_{1}, \cdots, x_{n}, y_{n} \right)= 2   x_{1} y_{1}   + \cdots + 2 x_{n} y_{n}$, 
\item $ {\mathbf{S}}_{1} \left( x_{n+1}, y_{n+1}, \cdots, x_{2n}, y_{2n} \right)=   \left( {x_{n+1}}^{2} - {y_{n+1}}^{2}  \right) + \cdots +  \left( {x_{2n}}^{2} - {y_{2n}}^{2}  \right)$,  
\item ${\mathbf{S}}_{2} \left( x_{n+1}, y_{n+1}, \cdots, x_{2n}, y_{2n} \right)= 2   x_{n+1} y_{n+1}   + \cdots + 2 x_{2n} y_{2n}$. 
\end{enumerate}
From the proof of Theorem \ref{main 2} and the item (2) of Theorem \ref{main 2}, one knows that  the function 
\[
{\mathbf{f}} \left( x_{1}, y_{1}, \cdots, x_{2n}, y_{2n} \right)= \theta_{1}  + \theta_{2}, \quad \text{where} \;
 {\theta}_{1} = arctan \left( \frac{ {\mathbf{T}}_{2} }{ {\mathbf{T}}_{1} } \right), \;  \text{and} \; {\theta}_{2} = arctan \left( \frac{ {\mathbf{S}}_{2} }{ {\mathbf{S}}_{1} } \right)
\]
is both harmonic and $\infty$-harmonic, and obtain the minimal quintic  cone in ${\mathbb{R}}^{4n+2}$:
\[
   y_{2n+1} \left[  {\mathbf{T}}_1 {\mathbf{S}}_1 - {\mathbf{T}}_2 {\mathbf{S}}_2  \right] - x_{2n+1} \left[  {\mathbf{T}}_2 {\mathbf{S}}_1 + {\mathbf{T}}_1 {\mathbf{S}}_2 \right]  =0. 
\] 
\end{example}

\begin{example}[\textbf{Lawson's algebraic minimal cones in ${\mathbb{R}}^{2n}$}]     \label{E8}
Let $k_{1}$, $\cdots$, $k_{n}$ be positive integers with $gcd \left( k_{1}, \cdots, k_{n} \right)=1$. 
Observing that angle functions $arctan \left( \frac{y_{1}}{x_{1}} \right)$, $\cdots$,  $arctan \left( \frac{y_{n}}{x_{n}} \right)$ 
are harmonic and $\infty$-harmonic, one can associate the minimal hypersurface  in  ${\mathbb{R}}^{2n}$:
\[
 \left\{ \;  \left(x_{1}, y_{1}, \cdots, x_{n}, y_{n} \right)  \in {\mathbb{R}}^{2n}  \;  \vert \;  
 0=  k_{1} \, arctan \left( \frac{y_{1}}{x_{1}} \right) + \cdots  +  k_{n} \, arctan \left( \frac{y_{n}}{x_{n}} \right)  \; \right\}. 
\]
This level set produces the algebraic minimal cone in ${\mathbb{R}}^{2n}$: 
\[
  0 = \mathbf{Im} \, \left[  \, {\left( x_1 + i y_{1} \right)}^{k_{1}} \cdots {\left( x_n + i y_{n} \right)}^{k_{n}} \, \right].
\]
It is congruent to the example constructed by H. Lawson \cite[p. 352]{Lawson1970} 
after suitable rotations.
For instance, taking $n=3$ and $k_{1}=k_{2}=k_{3}=1$, one obtains the following minimal cubic cone in  ${\mathbb{R}}^{6}$:
\[
0 = y_1 x_2 x_3 + x_1 y_2 x_3 + x_1 x_2 y_3 - y_1 y_2 y_3.
\] 
\end{example}

\textbf{Acknowledgement}: 
The author is grateful to Hojoo Lee for collaboration, and to Vladimir Tkachev for sending him,
after completion of the above article, on June 27 his unpublished notes with V.Sergienko and (on June 28/29, before appearance) his extended June 28 arXive submission.

\bigskip


\end{document}